\documentclass[a4paper]{amsart}

\usepackage[T1]{fontenc}
\usepackage{lmodern}
\usepackage{amssymb, amsmath, amsthm}

\theoremstyle{plain}
\newtheorem{thm}{Theorem}
\newtheorem{prop}{Proposition}
\newtheorem{lem}{Lemma}

\theoremstyle{remark}
\newtheorem{rem}{Remark}
\newtheorem*{exam}{Example}
\newtheorem*{prob}{Problem}

\newcommand{\imax}{\underline{m}}
\DeclareMathOperator{\Arg}{Arg}

\title[smooth solutions of quasianalytic or ultraholomorphic equations]{smooth solutions of quasianalytic or ultraholomorphic equations}

\author{Vincent Thilliez}

\address{Math\'ematiques - B\^atiment M2\\
Universit\'e des Sciences et Technologies de Lille\\
F-59655 Villeneuve d'Ascq Cedex, France}

\email{thilliez@math.univ-lille1.fr}

\subjclass[2000]{26E10, 30D60, 32B10}

\begin{document}

\begin{abstract} In the first part of this work, we consider a polynomial 
$ \varphi(x,y)=y^d+a_1(x)y^{d-1}+\cdots+a_d(x) $
whose coefficients  $ a_j $ belong to a Denjoy-Carleman quasianalytic local ring $ \mathcal{E}_1(M) $. Assuming that $ \mathcal{E}_1(M) $ is stable under derivation, we show that if $ h $ is a germ of $ C^\infty $ function such that $ \varphi(x,h(x))=0 $, then $ h $ belongs to $ \mathcal{E}_1(M) $. This extends a well-known fact about real-analytic functions. We also show that the result fails in general for non-quasianalytic ultradifferentiable local rings. In the second part of the paper, we study a similar problem in the framework of ultraholomorphic functions on sectors of the Riemann surface of the logarithm. We obtain a result that includes suitable non-quasianalytic situations. 
\end{abstract}

\maketitle

\section*{Introduction}

The starting point of the present paper is the following classical result:

\begin{thm}[\cite{M, Sic}]\label{classic}
Let $ \varphi $ be a non-zero germ of real-analytic function at the origin in $ {\bf R}^{n+1} $. If $ h $ is germ of $ C^\infty $ function at the origin in $ {\bf R}^n $ such that $ \varphi(x,h(x))=0 $, then $ h $ is real-analytic. 
\end{thm}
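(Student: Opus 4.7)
My plan is to reduce the problem to the polynomial setting discussed in the first part of the paper, and then combine the implicit function theorem with the Newton--Puiseux theorem. After translation I assume $\varphi(0,0)=0$ and $h(0)=0$. A standard linear change of coordinates (preserving the $C^\infty$ regularity of $h$) puts $\varphi$ in a $y$-regular position, and the Weierstrass preparation theorem then factors $\varphi = U \cdot P$, with $U$ an analytic unit and
\[
P(x,y) = y^d + a_1(x) y^{d-1} + \cdots + a_d(x)
\]
a distinguished polynomial with real-analytic coefficients vanishing at $0$. Since $U$ does not vanish along the graph of $h$ near the origin, one obtains $P(x,h(x))\equiv 0$. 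Replacing $P$ by its squarefree part in the unique factorization domain $\mathcal{O}_n[y]$ (which still annihilates $(x,h(x))$ pointwise) I may further assume that the $y$-discriminant $\Delta\in\mathcal{O}_n$ of $P$ is a non-zero real-analytic germ.

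On the dense open set $\Omega=\{x:\Delta(x)\neq 0\}$ the implicit function theorem exhibits $h$ locally as one of the $d$ distinct analytic branches of $P(x,\cdot)=0$, so $h$ is real-analytic on $\Omega$. To extend the analyticity to a full neighborhood of $0$ I would reduce to one variable: for every $v \in \mathbb{R}^n$ the function $t \mapsto h(tv)$ is $C^\infty$ and still satisfies a polynomial identity with real-analytic coefficients in the single variable $t$. The one-variable case is then handled by the Newton--Puiseux theorem, which describes the real roots of $P(x,\cdot)=0$ in a one-sided neighborhood of $0$ as finitely many convergent fractional power series; continuity forces $h$ to coincide with one such branch on $(0,\varepsilon)$ and with another on $(-\varepsilon,0)$, and the $C^\infty$ condition at $0$ kills every term with non-integral exponent, leaving an ordinary convergent Taylor expansion on each side. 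These two one-sided power series share all Taylor coefficients with the $C^\infty$ germ of $h$ at $0$, hence coincide, and provide an analytic representation of $h$ on a full neighborhood of $0 \in \mathbb{R}$. A classical theorem of Bochnak and Siciak then upgrades this line-by-line analyticity of the $C^\infty$ function $h$ to joint analyticity in a neighborhood of $0 \in \mathbb{R}^n$.

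The main obstacle is the passage across the discriminant locus, concentrated in the one-variable step: one must verify carefully that the $C^\infty$ condition really kills every Puiseux coefficient with non-integer exponent and that the two one-sided power series must agree, and one must also be sure that reducing to lines through the origin does not lose the information needed to propagate analyticity through higher-dimensional degeneracies of $\Delta$. An alternative, more analytic, route would bypass Puiseux altogether and rely on uniform Cauchy estimates for the holomorphic branches of the complexified polynomial on shrinking polydisks avoiding $\{\Delta=0\}$, yielding direct bounds of the form $|\partial^\alpha h(0)|\leq C^{|\alpha|+1}\alpha!$; but controlling such estimates as $\Delta(x)\to 0$ is essentially the same difficulty expressed quantitatively.
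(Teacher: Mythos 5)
The paper does not actually prove Theorem \ref{classic}: it quotes it from Malgrange and Siciak, and only sketches in the introduction the standard reduction (Weierstrass preparation, a potential-theoretic argument, then power series techniques in the resulting polynomial situation). Your proposal follows essentially this standard route, and most of its steps are sound. The preparation and squarefree reductions are correct (recovering the graph structure after a general linear change of coordinates uses the $C^\infty$ implicit function theorem, which you implicitly invoke), and the one-variable step is complete: a continuous root must follow a single Puiseux branch on each side of $0$, smoothness kills the fractional exponents, and the two one-sided integral power series both equal the Taylor series of the smooth germ at $0$, hence coincide and give analyticity on the line.

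The genuine gap is the one you yourself flag at the end: the Bochnak--Siciak theorem requires analyticity of $h$ on \emph{every} line segment in a neighborhood of $0$, not only on the lines through the origin. Analyticity at $0$ along each line through $0$ (even for a $C^\infty$ function that is moreover analytic off a proper analytic subset) does not yield joint analyticity at $0$, because the radius of convergence of $t\mapsto h(tv)$ may degenerate as $v$ varies; this is precisely why the hypothesis of Bochnak--Siciak is formulated for all lines, and a germ statement requires analyticity at every point near $0$, including points of $\{\Delta=0\}$ that the lines through the origin miss. Fortunately the repair costs nothing: for every base point $a$ near $0$ and every direction $v$, the function $t\mapsto h(a+tv)$ is $C^\infty$ and satisfies $P(a+tv,h(a+tv))=0$, a monic polynomial equation with coefficients analytic in $t$, so your one-variable Puiseux argument applies verbatim at $(a,v)$ (after again passing to the squarefree part of this one-variable polynomial, since the line may lie entirely inside the discriminant locus). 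This gives analyticity of $h$ on every line segment, and Bochnak--Siciak then legitimately concludes. With that modification the proof is correct.
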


It is natural to ask whether a similar result holds in quasianalytic  situations, that is, when $ \varphi $ belongs to a quasianalytic local ring $ \mathcal{E}_{n+1}(M) $ (see the definition in Section \ref{udg}). However, the problem seems difficult to address in full generality. In Section \ref{uqac} of the present paper, we consider the particular case of quasianalytic polynomials in two variables, that is,
\begin{equation*}
\varphi(x,y)=y^d+a_1(x)y^{d-1}+\cdots+a_d(x)
\end{equation*}
where the coefficients $ a_j $ are function germs of one variable belonging to a given quasianalytic local ring $ \mathcal{E}_1(M) $. 
Despite its simplicity, this particular case is important, since in the analytic setting, Theorem \ref{classic} can be reduced to the same situation \emph{via} the preparation theorem and a potential-theoretic argument, so that the analyticity of smooth roots can then be obtained by elementary power series techniques (see e.g. Chapter V of \cite{Ba}). In the quasianalytic setting, power series expansions of germs are merely formal and cannot be used in the same way. We therefore combine an application of the Artin-type approximation property of Rotthaus \cite{Ro} together with a version of Puiseux's theorem for quasianalytic polynomials. This allows us, under a mild extra assumption, namely the stability of $ \mathcal{E}_1(M) $  under derivation, to show that if $ h $ is a germ of $ C^\infty $ function such that $ \varphi(x,h(x))=0 $, then $ h $ belongs to $ \mathcal{E}_1(M) $, as expected: this is the statement of Theorem \ref{main}. We then give an example showing that the result is generally false when $ \mathcal{E}_1(M) $ is non-quasianalytic. 

This example suggests another question, namely whether it is possible to obtain a result similar to Theorem \ref{main} in a somewhat different non-quasianaly\-tic situation, namely for ultraholomorphic classes. Thus, in Section \ref{Ult}, we consider the space $ \mathcal{A}^\infty(S) $ of functions holomorphic in a given bounded angular sector $ S $ of the Riemann surface of the logarithm and uniformly bounded in $ S $ at any order of derivation, which ensures the existence of an asymptotic expansion at the vertex. The ultraholomorphic classes $ \mathcal{A}_M(S) $ are subspaces of $ \mathcal{A}^\infty(S) $ defined by Denjoy-Carleman type estimates associated with a given sequence $ M $. Given a polynomial $ \varphi(z,w)=w^d+a_1(z)w^{d-1}+\cdots+a_d(z) $ whose coefficients $ a_j $ belong to $ \mathcal{A}_M(S) $ and an element $ h $ of $ \mathcal{A}^\infty(S) $ such that $ \varphi(z,h(z))=0 $, we show in Theorem \ref{main2} that $ h $ actually belongs to $ \mathcal{A}_M(S') $ for any strict subsector $ S' $ of $ S $, provided the sequence $ M $ satisfies the so-called \emph{moderate growth} and \emph{strong non-quasianalyticity} assumptions (see Section \ref{spos} for the definitions). For instance, the classes of functions with Gevrey expansions used in the asymptotic theory of ordinary differential equations fall within this framework.  It is also known that such a class $ \mathcal{A}_M(S) $ is non-quasianalytic if the aperture of $ S $ is sufficiently small. Thus, Theorem \ref{main2} works in certain quasianalytic and non-quasianalytic situations as well, whereas the important role of the moderate growth property of $ M $ is discussed at the end of the paper.  

\section{The Ultradifferentiable Case}\label{uqac}

\subsection{Some properties of sequences}\label{spos}
Throughout the paper, $ M=(M_j)_{j\geq 0} $ will denote a sequence of real numbers satisfying the following assumptions \eqref{norm} and \eqref{logc}:
\begin{equation}\label{norm}
\text{ the sequence } M \text{ is increasing, with } M_0=1,
\end{equation}
\begin{equation}\label{logc}
\text{ the sequence } M \text{ is \emph{logarithmically convex}}.
\end{equation}
Property \eqref{logc} amounts to saying that $ M_{j+1}/M_j $ increases. Several other assumptions on $ M $ will be considered, depending on the context. These properties, whose role will be discussed when needed, are as follows: 
\begin{itemize}
\item the so-called \emph{derivability} condition
\begin{equation}\label{stabder}
\sup_{j\geq 1}(M_{j+1}/M_j)^{1/j}<\infty,
\end{equation}
\item the obviously stronger \emph{moderate growth} condition
\begin{equation}\label{modg}
\sup_{j+k\geq 1}\left(\frac{M_{j+k}}{M_jM_k}\right)^{\frac{1}{j+k}}<\infty,
\end{equation}
\item the well-known \emph{Denjoy-Carleman quasianalyticity condition}
\begin{equation}\label{dcqa}
\sum_{j\geq 0}\frac{M_j}{(j+1)M_{j+1}}=\infty,
\end{equation}
\item the so-called \emph{strong non-quasianalyticity condition}
\begin{equation}\label{snqa}
\sup_{k\in\mathbb{N}}\frac{M_{k+1}}{M_k}\sum_{j\geq k}\frac{M_j}{(j+1)M_{j+1}}<\infty.
\end{equation}
\end{itemize}

\begin{exam} 
Being given real numbers $ \alpha\geq 0 $ and $ \beta\geq 0 $, put $ M_j=j!^\alpha\big(\ln(j+e)\big)^{\beta j} $. The sequence $ M $ satisfies properties \eqref{norm}, \eqref{logc} and \eqref{modg}. It satisfies \eqref{dcqa} if and only if $ \alpha=0 $ and $ \beta \leq 1 $. It satisfies \eqref{snqa} if and only if $ \alpha>0 $. This is the case, in particular, for Gevrey sequences $ M_j=j!^\alpha  $ with $ \alpha>0 $.
\end{exam}

With every sequence $ M $ satisfying \eqref{norm} and \eqref{logc} we also associate the function $ h_M $ defined by $ h_M(t)=\inf_{j\geq 0}t^jM_j $ for any real $ t>0 $, and $ h_M(0)=0 $. It is easy to 
check that if the sequence $ M $ also satisfies \eqref{stabder}, then for any real $ \nu>0 $, there is a constant $ C(\nu)>0 $ such that
\begin{equation}\label{hfunct1}
t^{-\nu} h_M(t)\leq C(\nu)h_M(t)\textrm{ for any } t>0.
\end{equation}
Remark also 
that for any real number $ s\geq 1 $, one obviously has 
$ \big(h_M(t)\big)^s\leq h_M(t) $. An elementary but important consequence
of the moderate growth assumption \eqref{modg} is that it implies the 
existence of a constant $ \rho(s)\geq 1 $, depending
only on $ s $ and $ M $, such that, conversely,
\begin{equation}\label{hfunct2}
h_M(t)\leq \big(h_M(\rho(s)t)\big)^s\text{ for any }t\geq 0.
\end{equation}
We refer e.g. to \cite{CC1} for a proof of the implication \eqref{modg}$\Rightarrow$\eqref{hfunct2}.

\subsection{Notation} For any multi-index $ J=(j_1,\ldots,j_n) $ of $ \mathbb{N}^n $, we denote the length $ j_1+\cdots+j_n $ of $ J $ by the corresponding lower case letter $ j $. We put $ J!=j_1!\cdots j_n! $,
$ D^J=\partial^j/\partial x_1^{j_1}\cdots\partial x_n^{j_n} $ and $ x^J=x_1^{j_1}\cdots x_n^{j_n} $. 

The order of a formal power series $ G $, that is, the lowest degree of non-zero monomials of $G $, will be denoted by $ \omega(G) $, with the usual convention $ \omega(0)=+\infty $. With any germ $ g $ of $ C^\infty $ function at the origin in $ \mathbb{R}^n $ we associate its formal Taylor expansion at the origin, hereafter denoted by $ \widehat{g} $. The order $ \omega(g) $ of $ g $ is defined as $ \omega(\widehat{g}) $.

\subsection{Ultradifferentiable function germs}\label{udg}

We recall several basic facts on quasianalytic local rings that will be needed in what follows. For a more detailed account, we refer the reader to \cite{Th}, for instance.

Let $ M $ be a real sequence satisfying \eqref{norm} and \eqref{logc}. We denote by $ \mathcal{E}_n(M) $ the set of germs $ f $ of $ C^\infty $ functions at the origin in $ \mathbb{R}^n $ for which there exist a neighborhood $ U $ of $ 0 $ in $ \mathbb{R}^n $ and positive constants $ C $ and $ \sigma $ such that
\begin{equation}
\big\vert D^Jf(x)\big\vert \leq C\sigma^jj!M_j\ \textrm{ for any } J\in \mathbb{N}^n\textrm{ and } x\in U.
\end{equation}
As explained in \cite{Th}, the set $ \mathcal{E}_n(M) $ is a local ring, with maximal ideal $ \imax_M=\{f\in\mathcal{E}_n(M) : f(0)=0\} $. It is stable under composition in the sense that any element of $ \mathcal{E}_p(M) $ operates on $ (\mathcal{E}_n(M))^p$. Since the implicit function theorem also holds in this setting, a standard argument shows that $ \mathcal{E}_n(M) $ is henselian.

It is known that the local ring $ \mathcal{E}_n(M) $ is stable under derivation if and only if $ M $ satisfies \eqref{stabder}. In this case, it is easy to see that $ \imax_M $ is generated by the coordinate functions $ x_1,\ldots,x_n $. Using the Taylor formula with integral remainder, one also checks that stability under derivation implies stability under monomial division in the following sense: if $ f $ belongs to $ \mathcal{E}_n(M) $ and if a given monomial $ x^J $ divides $ \widehat{f}$ in the ring of formal power series, then $ f(x)=x^Jg(x) $ for some $ g $ belonging to $ \mathcal{E}_n(M) $.  

A $ C^\infty$ function germ is said to be \emph{flat} if it vanishes, together with all its derivatives, at the origin. The local ring $ \mathcal{E}_n(M) $ is said to be \emph{quasianalytic} if the only flat germ that it contains is $ 0 $. By the famous Denjoy-Carleman theorem, $ \mathcal{E}_n(M) $ is quasianalytic if and only if the sequence $ M $ satisfies \eqref{dcqa}. 

When $ M $ satisfies \eqref{stabder} and \eqref{dcqa}, in other words, the conditions for quasianalyticity and stability under derivation, it is easy to check that the corresponding ring of one variable germs $ \mathcal{E}_1(M) $ is a discrete valuation ring. By \cite{Groth}, scholie 7.8.3 (iii), it is an excellent ring. As observed in \cite{Pier}, one can therefore invoke Theorem 4.2 of \cite{Ro} to get the following technical tool. 

\begin{prop}\label{Artin}
Assume that $ \mathcal{E}_1(M) $ is quasianalytic and stable under derivation. Let $ \varphi $ be an element of $ \mathcal{E}_1(M)[y] $ and let $ Z $ be a formal power series in one variable such that $ \widehat{\varphi}(x,Z(x))=0 $. Then, for every integer $ \nu\geq 0 $, there is an element $ z_\nu $ of $ \mathcal{E}_1(M) $ such that 
$ \varphi(x,z_\nu(x))=0 $ and $ \omega(Z-\widehat{z_\nu})\geq\nu $.
\end{prop}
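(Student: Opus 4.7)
The plan is to verify the hypotheses of Rotthaus's Artin approximation theorem and then invoke it directly; no independent argument is really needed. Under quasianalyticity \eqref{dcqa} together with stability under derivation \eqref{stabder}, the excerpt already records that $\mathcal{E}_1(M)$ is a discrete valuation ring, that it is henselian, and, via scholie 7.8.3 (iii) of \cite{Groth}, that it is excellent. Moreover, since $\mathcal{E}_1(M)$ is quasianalytic, the Taylor map $f\mapsto\widehat{f}$ is injective, so the $\imax_M$-adic completion of $\mathcal{E}_1(M)$ is naturally identified with the formal power series ring $\mathbb{R}[[x]]$.

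Consequently, the series $Z$ can be viewed as an element of the completion $\widehat{\mathcal{E}_1(M)}$, and the relation $\widehat{\varphi}(x,Z(x))=0$ says exactly that $Z$ is a formal solution of the polynomial equation $\varphi(x,y)=0$, whose coefficients lie in $\mathcal{E}_1(M)$. Rotthaus's Theorem 4.2 of \cite{Ro} asserts that every excellent henselian local ring has the Artin approximation property: a formal solution of a finite system of polynomial equations over the ring can be approximated to any prescribed order by a solution in the ring itself. Applied to the single equation $\varphi(x,y)=0$ and to its formal solution $Z$, this produces, for each integer $\nu\geq 0$, an element $z_\nu\in\mathcal{E}_1(M)$ satisfying $\varphi(x,z_\nu(x))=0$ and agreeing with $Z$ modulo $x^\nu$ at the level of Taylor series, which is precisely the required estimate $\omega(Z-\widehat{z_\nu})\geq\nu$.

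The only point that deserves any care is the identification of the completion with formal power series: this depends crucially on quasianalyticity, since without it the Taylor map would have a nonzero kernel and the series $Z$ would not canonically lie in the abstract completion of $\mathcal{E}_1(M)$. Beyond that, the real content is entirely packaged inside the two cited results, namely the excellence of $\mathcal{E}_1(M)$ from \cite{Groth} and the Artin approximation property for excellent henselian local rings from \cite{Ro}; so the ``main obstacle'' is simply checking that those hypotheses are in force, which the excerpt has already done.
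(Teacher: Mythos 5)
Your proposal is correct and follows exactly the route the paper itself takes: $\mathcal{E}_1(M)$ is a henselian discrete valuation ring, hence excellent by scholie 7.8.3 (iii) of \cite{Groth}, and Theorem 4.2 of \cite{Ro} (as observed in \cite{Pier}) then yields the approximation property. Your additional remark that quasianalyticity is what identifies the $\imax_M$-adic completion with $\mathbb{R}[[x]]$, so that $Z$ genuinely lives in the completion, is a worthwhile point that the paper leaves implicit.
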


The version of Puiseux's theorem stated hereafter can be proved by an immediate adaptation of  \cite{BM}, Section 3, since the proof given there in the analytic case only requires stability under monomial division and under composition with analytic maps, and does not use power series expansions.

\begin{prop}\label{Puiseux} 
Assume that $ \mathcal{E}_1(M) $ is quasianalytic and stable under derivation, and consider a polynomial 
$\varphi(x,y)=y^d+a_1(x)y^{d-1}+\cdots+a_d(x)$
with $ a_j\in\mathcal{E}_1(M) $ for $ j=1,\ldots,d $.
Then there exist positive integers $ m, n_1,\ldots,n_J $ and elements $ y_1,\ldots,y_J $ of $ \mathcal{E}_1(M) $ such that 
\begin{equation}\label{puiform}
\varphi(x^m,y)=\prod_{j=1}^J(y-y_j(x))^{n_j}.
\end{equation}
\end{prop}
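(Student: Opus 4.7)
The plan is to argue by induction on the degree $d$, following the classical Puiseux algorithm. The three ingredients already available in $\mathcal{E}_1(M)$ are: stability under monomial division (a consequence of stability under derivation, as recalled in Section \ref{udg}), stability under composition with power maps $x \mapsto x^k$, and the henselian property; together they are exactly what is needed to push the analytic argument of \cite{BM}, Section 3, through verbatim.

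The base case $d=1$ is immediate, with $m=1$ and $y_1=-a_1$. For the inductive step, I would first perform a Tschirnhaus substitution $y \mapsto y - a_1(x)/d$ to reduce to $a_1 \equiv 0$. If all remaining $a_j$ vanish then $\varphi = y^d$ and we are done. Otherwise, set $\mu = \min\{\omega(a_j)/j : a_j \not\equiv 0\}$ and write $\mu = p/q$ in lowest terms. Then $\omega(a_j(x^q)) = q\,\omega(a_j) \geq jp$ for every $j$, so monomial division yields $a_j(x^q) = x^{jp}\,b_j(x)$ with $b_j \in \mathcal{E}_1(M)$; and any index $j_0$ realizing the minimum $\mu$ (which must satisfy $q \mid j_0$) gives $b_{j_0}(0) \neq 0$. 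Introduce
\[
\psi(x,u) = x^{-dp}\,\varphi(x^q, x^p u) = u^d + b_1(x)u^{d-1} + \cdots + b_d(x) \in \mathcal{E}_1(M)[u].
\]
Its specialization $\psi(0,u)$ has zero sum of roots (since $b_1 \equiv 0$) and is not the monomial $u^d$ (since $b_{j_0}(0) \neq 0$), so it cannot be any $d$-th power $(u-u_0)^d$. Hence $\psi(0,u)$ admits a coprime monic factorization $P_0(u)Q_0(u)$ with $\deg P_0,\deg Q_0 \geq 1$, and the henselian property of $\mathcal{E}_1(M)$ lifts this to $\psi(x,u) = P(x,u)Q(x,u)$ with $P, Q$ monic in $u$ of degrees strictly less than $d$ and coefficients in $\mathcal{E}_1(M)$.

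Apply the induction hypothesis separately to $P$ and $Q$, obtaining positive integers $m_P,m_Q$ and factorizations into linear factors with roots in $\mathcal{E}_1(M)$. Setting $m' = m_P m_Q$ and composing each such root with $x \mapsto x^{m'/m_P}$ or $x \mapsto x^{m'/m_Q}$ (which stays in $\mathcal{E}_1(M)$) produces an identity $\psi(x^{m'},u) = \prod_j \bigl(u - \tilde y_j(x)\bigr)^{n_j}$. The elementary rewriting $\varphi(x^{qm'},y) = x^{dpm'}\,\psi\bigl(x^{m'},\,y/x^{pm'}\bigr)$ then yields $\varphi(x^{qm'},y) = \prod_j \bigl(y - x^{pm'}\tilde y_j(x)\bigr)^{n_j}$, which is \eqref{puiform} with $m=qm'$ and $y_j = x^{pm'}\tilde y_j$; the initial Tschirnhaus translation is finally undone by replacing each resulting $y_j$ by $y_j - a_1(x^m)/d \in \mathcal{E}_1(M)$. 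The main — and in fact only — place where the quasianalytic framework is genuinely used is in securing the $b_j$ via monomial division; the remainder is formal Newton-polygon bookkeeping, Hensel's lemma, and stability under composition, which is why the hypothesis \eqref{stabder} is indispensable here.
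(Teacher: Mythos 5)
Your proof follows exactly the route the paper intends: its own ``proof'' of Proposition \ref{Puiseux} is merely the remark that one can adapt Bierstone--Milman, Section 3, and what is in that reference is precisely the Newton--Puiseux algorithm you spell out (Tschirnhaus reduction, Newton-polygon slope $\mu=p/q$, the substitutions $x\mapsto x^q$ and $y\mapsto x^p u$ combined with monomial division, Hensel splitting, induction on $d$). The bookkeeping in your inductive step --- the identity $\varphi(x^q,x^pu)=x^{dp}\psi(x,u)$, the non-vanishing of $b_{j_0}(0)$, the passage to a common ramification index $m'$, and the undoing of the Tschirnhaus shift --- is correct.

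There is, however, one step that fails as written: the Hensel lifting. The residue field of $\mathcal{E}_1(M)$ is $\mathbb{R}$, so the coprime factorization of $\psi(0,u)$ you feed into the henselian property must be a factorization over $\mathbb{R}$, and having at least two distinct complex roots does not guarantee a nontrivial one: if $\psi(0,u)=(u^2+\beta^2)^{d/2}$ with $\beta\neq 0$ (which is compatible with $b_1=0$ and $b_{j_0}(0)\neq 0$), every real coprime monic factorization is trivial and your induction stalls. This is not an artifact of the method: with real-valued $y_j$ the conclusion itself is false (take $\varphi(x,y)=y^2+1$), so the $y_j$ must be understood as complex-valued elements, i.e.\ elements of $\mathcal{E}_1(M)\otimes\mathbb{C}$, as in Bierstone--Milman; the entire induction should be run in that complexified ring, which is still local and henselian, with residue field $\mathbb{C}$, and stable under derivation, monomial division and composition with $x\mapsto x^k$ --- there the splitting of $\psi(0,u)$ according to its distinct roots is always available. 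A second, smaller correction: quasianalyticity does not enter through monomial division, which needs only \eqref{stabder}; it enters through the finiteness of $\omega(a_j)$ for $a_j\neq 0$, hence of $\mu$. Without \eqref{dcqa} all nonzero coefficients could be flat, the Newton polygon would be empty, and the algorithm could not start --- which is exactly what happens in the counterexample of Section \ref{contrex}.
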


\subsection{Smooth roots of quasianalytic polynomials}

We can now state the first main result of this paper.

\begin{thm}\label{main} Assume that $ \mathcal{E}_1(M) $ is quasianalytic and stable under derivation, and
consider a polynomial 
\begin{equation*}
\varphi(x,y)=y^d+a_1(x)y^{d-1}+\cdots+a_d(x)
\end{equation*}
with $ a_j\in\mathcal{E}_1(M) $ for $ j=1,\ldots,d $. If $ h $ is a germ of $ C^\infty $ function at the origin in $ \mathbb{R} $ such that $ \varphi(x,h(x))=0 $, then $ h $ belongs to $ \mathcal{E}_1(M) $.
\end{thm}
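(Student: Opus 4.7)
The plan is to combine the Artin-type approximation (Proposition~\ref{Artin}) with the quasianalytic Puiseux theorem (Proposition~\ref{Puiseux}). First, I would set $Z:=\widehat h$, so that $\widehat\varphi(x,Z(x))=0$ in $\mathbb{R}[[x]]$, and apply Proposition~\ref{Artin} to obtain, for each integer $\nu\geq 0$, a germ $z_\nu\in\mathcal{E}_1(M)$ satisfying $\varphi(x,z_\nu(x))=0$ and $\omega(Z-\widehat{z_\nu})\geq\nu$. Since quasianalyticity embeds $\mathcal{E}_1(M)$ into $\mathbb{R}[[x]]$ via $f\mapsto\widehat f$, the ring $\mathcal{E}_1(M)$ is an integral domain, so the monic polynomial $\varphi\in\mathcal{E}_1(M)[y]$ admits at most $d$ roots in $\mathcal{E}_1(M)$. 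A pigeonhole argument on the sequence $(z_\nu)$ then yields some $z^*\in\mathcal{E}_1(M)$ with $z^*=z_\nu$ for infinitely many $\nu$, which forces $\omega(Z-\widehat{z^*})=+\infty$, i.e.\ $\widehat h=\widehat{z^*}$. Hence the $C^\infty$ germ $\Delta:=h-z^*$ is flat at the origin, and the problem reduces to showing $\Delta=0$.

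Next, I would invoke Proposition~\ref{Puiseux} to write $\varphi(x^m,y)=\prod_{j=1}^{J}(y-y_j(x))^{n_j}$ with pairwise distinct $y_j\in\mathcal{E}_1(M)$, and set $H(x):=h(x^m)$, $Z^*(x):=z^*(x^m)$. Both are smooth germs with $\prod_j(H-y_j)^{n_j}=\prod_j(Z^*-y_j)^{n_j}=0$, and since each nonzero difference $y_j-y_{j'}\in\mathcal{E}_1(M)$ has finite order by quasianalyticity, the values $y_j(x)$ are pairwise distinct for $x$ in a punctured neighborhood of $0$. Therefore, on each connected component $(0,\epsilon)$ and $(-\epsilon,0)$, $H$ is constantly equal to some $y_{j_\pm}$ and $Z^*$ to some $y_{j'_\pm}$. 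Now $H-Z^*$ is flat at $0$ (because $\widehat H=\widehat h(x^m)=\widehat{z^*}(x^m)=\widehat{Z^*}$), so on each side the germ $y_{j_\pm}-y_{j'_\pm}$ has vanishing Taylor series at $0$; by quasianalyticity applied inside $\mathcal{E}_1(M)$, $y_{j_\pm}=y_{j'_\pm}$, whence $j_\pm=j'_\pm$. Thus $H=Z^*$ as germs, i.e.\ $h(x^m)=z^*(x^m)$ on a full neighborhood of $0$.

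If $m$ is odd, $x\mapsto x^m$ is a local bijection at $0$ and $h=z^*\in\mathcal{E}_1(M)$ follows immediately. If $m$ is even, the preceding identity only covers $t\geq 0$, and I would cover $t\leq 0$ by applying Proposition~\ref{Puiseux} to $\widetilde\varphi(s,y):=\varphi(-s,y)$, which lies in $\mathcal{E}_1(M)[y]$ by stability under composition, and running the same argument to obtain $h(-x^{\widetilde m})=z^*(-x^{\widetilde m})$ near $0$. Combining both halves gives $h=z^*$ on a full neighborhood, hence $h\in\mathcal{E}_1(M)$. The main obstacle is the second paragraph: synchronizing the ``branch choices'' of $H$ and $Z^*$ on either side of $0$. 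This step relies crucially on flatness of $H-Z^*$ (coming from the Artin plus pigeonhole step) combined with the fact that any nonzero element of a quasianalytic ring has finite order; without this interplay, the flat difference $H-Z^*$ could a priori mask a genuine swap between two distinct $y_j$-branches.
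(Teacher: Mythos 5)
Your proposal is correct and follows essentially the same route as the paper: Artin approximation to identify $\widehat{h}$ with the Taylor expansion of an actual root $z^*\in\mathcal{E}_1(M)$, then the quasianalytic Puiseux factorization together with a branch argument on each side of $0$ to upgrade $\widehat{h}=\widehat{z^*}$ to $h=z^*$, with the $\varphi(-x,y)$ trick handling even $m$. The only genuine difference is local: you get $\widehat{h}=\widehat{z^*}$ by pigeonholing the approximants $z_\nu$ among the at most $d$ roots of the monic polynomial $\varphi$ in the integral domain $\mathcal{E}_1(M)$, whereas the paper first enumerates the finitely many formal solutions and applies Proposition~\ref{Artin} once with a single sufficiently large $\nu$; both arguments are valid.
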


\begin{proof} 
Using Proposition \ref{Puiseux}, we remark first that the zero set of $ (x,y)\mapsto \varphi(x^m,y)$ is the union of the germs of smooth curves $ \Gamma_j=\{(x,y): y=y_j(x)\} $, $ j=1,\ldots, J $. Quasianalyticity implies the finiteness of $ \omega(y_i-y_j) $ for $ i\neq j $ (in particular, the $ \Gamma_j $'s do not intersect at points $ (x,y) $ with $ x\neq 0 $). Since, by assumption, we have $ \varphi(x^m,h(x^m))=0 $, we derive that there is an index $ j_0 $ such that
\begin{equation}\label{hy}
h(x^m)=y_{j_0}(x).
\end{equation}
Now, remark that the equation
\begin{equation}\label{formeq}
\widehat{\varphi}(x,Z(x))=0,
\end{equation}
where $ Z $ is a formal power series, has a finite number of solutions $ Z_1,\ldots,Z_K $: indeed, \eqref{puiform} and \eqref{formeq} imply $ Z(x^m)=\widehat{y_j}(x^m) $ for some $ j $. 
Let $ \nu$ be an integer such that $ \omega(Z_k-Z_l)<\nu $ for $ k\neq l $. Using Proposition \ref{Artin}, we obtain, for each integer $ k $, an element $ z_k=z_{k,\nu} $ of $ \mathcal{E}_1(M) $ such that
\begin{equation}\label{sol1}
\varphi(x,z_k(x))=0 
\end{equation}
and 
\begin{equation}\label{sol2}
\omega(Z_k-\widehat{z_k})\geq \nu . 
\end{equation}
From \eqref{sol1} we derive that $ \widehat{z_k} $ belongs to the finite family
$ \{Z_1,\ldots,Z_K\} $. By \eqref{sol2} and the choice of $ \nu $, we necessarily have, in fact, $ \widehat{z_k}=Z_k $ for $ k=1,\ldots,K $. Since the assumption on $ h $ implies that $ \widehat{h} $ is a solution of \eqref{formeq}, we get
\begin{equation}\label{hz}
\widehat{h}=\widehat{z_{k_0}}
\end{equation}
for some index $ k_0 $. Setting $ w(x)=z_{k_0}(x^m) $, and taking \eqref{hy} and \eqref{hz} into account, we see that $ \widehat{y_{j_0}}=\widehat{w} $. As both germs $ y_{j_0} $ and $ w $ belong to $ \mathcal{E}_1(M) $, we derive $ y_{j_0}=w $ by quasianalyticity. We therefore have proved $ h(x^m)=z_{k_0}(x^m) $, where $ z_{k_0} $ belongs to $ \mathcal{E}_1(M) $. If $ m $ is odd, we derive $ h=z_{k_0} $ and the job is done. If $ m $ is even, we also consider the germs $ \check{\varphi} $ and $ \check{h} $ defined by $ \check{\varphi}(x,y)=\varphi(-x,y) $ and $ \check{h}(x)=h(-x) $, so that $ \check{\varphi}(x,\check{h}(x))=0 $. The preceding argument yields an integer $ p\geq 1 $ and an element $ z $ of $ \mathcal{E}_1(M) $ such that $ \check{h}(x^p)=z(x^p) $, that is, $ h(-x^p)=z(x^p) $. Thus, $ h(t) $ coincides with $ z(t) $ for $ t\leq 0 $ and with $ z_{k_0}(t) $ for $ t\geq 0 $, as can be seen by setting $ t=-x^p $ and $ t=x^m $, respectively. Since $ h $ is smooth and both $ z $ and $ z_{k_0} $ belong to $ \mathcal{E}_1(M) $, we derive that $z=z_{k_0} $ and that $ h $ actually belongs to $ \mathcal{E}_1(M) $. The proof is complete.
\end{proof}

\begin{rem}
A related result appears in \cite{Neelon}, whose main theorem deals with functions $ g $ belonging to a quasianalytic class on a given interval $ I $ of $ \mathbb{R} $. If, for some real $ \alpha>0 $, the function $ g^\alpha $ is smooth, it is actually in the same class as $ g $ on $ I $. In our local setting, this result can be obtained either by elementary means (writing $ g(x)=x^\nu f(x) $ for some integer $\nu $ and some $ f $ in $\mathcal{E}_1(M) $ with $ f(0)\neq 0$), or as an application of Theorem \ref{main} with $ h=g^\alpha $ and $ \varphi(x,y)=y^q-(g(x))^p $, where $ \alpha=p/q $ (indeed, the smoothness of $ g^\alpha $ implies that $ \alpha $ is rational). The methods of \cite{Neelon} are based on the evaluation of composites of $ g $; they do not apply to roots of general Weierstrass polynomials.  
\end{rem}

\begin{rem}
We still do not know whether Theorem \ref{main} still holds in the higher-dimensional case, that is, when $ \mathcal{E}_1(M) $ is replaced by $ \mathcal{E}_n(M) $ with $n\geq 2 $ and $ h $ is a $ C^\infty $ function germ at the origin in $ \mathbb{R}^n $.
\end{rem}

%\begin{rem} 
%The proof of Theorem \ref{main} can be extended to any quasianalytic subalgebras $ \mathcal{C}_n $ of $ \mathcal{E}_n $ containing the algebra of real-analytic germs, stable under composition and monomial division, and such that $\mathcal{C}_1 $ is a discrete valuation ring.
%\end{rem}

\subsection{A counter-example}\label{contrex}

We now give the example of a non-quasianalytic local ring $ \mathcal{E}_1(M) $, stable under derivation, and of an element $ g $ of $ \mathcal{E}_1(M) $ whose square root is smooth but does not belong to $ \mathcal{E}_1(M) $. Setting $ \varphi(x,y)=y^2-g(x) $, we therefore see that $ h=\sqrt{g} $ provides a counter-example to the conclusion of Theorem \ref{main} when the quasianalyticity assumption is omitted. Incidentally, this  also answers a question about the roots of flat functions raised in \cite{Neelon}, p.132. 

We proceed as follows. With any real number $ \lambda>0 $, we associate the sequence $ M^\lambda $ defined by $ M_j^\lambda=\exp\bigl(\frac{\lambda}{4}j^2\bigr) $ and the function $ g_\lambda $ defined on $ \mathbb{R} $ by $ g_\lambda(x)=\exp\bigl(-\frac{1}{\lambda}(\ln x)^2\bigr) $ for $ x> 0 $ and $ g_\lambda(x)=0 $ for $ x\leq 0 $. It is easy to check that the sequences $ M^\lambda $ are logarithmically convex and that the local rings $ \mathcal{E}_n(M^\lambda) $ are stable under derivation and non-quasianalytic. Setting $ g=g_\lambda $, the desired counter-example will then be an immediate consequence of the following lemma, since the square root of $ g_\lambda $ is $ g_{2\lambda} $.

\begin{lem}\label{glam} 
The function $ g_\lambda $ belongs to $ \mathcal{E}_1(M^\lambda) $ but not to $ \mathcal{E}_1(M^{\mu}) $ for $ \mu<\lambda $.
\end{lem}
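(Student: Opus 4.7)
The plan is to establish the two halves separately: $g_\lambda \in \mathcal{E}_1(M^\lambda)$ by a direct Cauchy-type derivative estimate, and $g_\lambda \notin \mathcal{E}_1(M^\mu)$ for $\mu < \lambda$ by exploiting the flatness of $g_\lambda$ at the origin together with the associated function $h_{M^\mu}$.

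For the first claim, the strategy is to exploit that $g_\lambda$ extends holomorphically to the right half-plane as $z\mapsto\exp(-(\log z)^2/\lambda)$ (principal branch of $\log$), and to bound $g_\lambda^{(j)}(x)$ for $x>0$ via Cauchy's formula on a circle centered at $x$ of radius $Rx$ with $R\in(0,1)$ fixed. The key identity is that for $z$ on this circle,
\begin{equation*}
|g_\lambda(z)| = g_\lambda(|z|)\exp((\arg z)^2/\lambda),
\end{equation*}
since $\mathrm{Re}((\log z)^2)=(\ln|z|)^2-(\arg z)^2$. Combined with $|z|/x\in[1-R,1+R]$, $|\arg z|\leq \arcsin R<\pi/2$, and the monotonicity of $g_\lambda$ on $(0,1)$, this yields
\begin{equation*}
|g_\lambda^{(j)}(x)| \leq \frac{j!\,e^{\pi^2/(4\lambda)}}{(xR)^j}\,g_\lambda(x)(1+R)^{-2\ln x/\lambda}.
\end{equation*}
Setting $L=-\ln x$, the $x$-dependent factor becomes $\exp(jL+2L\ln(1+R)/\lambda-L^2/\lambda)$, whose supremum over $L$ is $\exp((j\lambda/2+\ln(1+R))^2/\lambda) = e^{j^2\lambda/4}(1+R)^j e^{(\ln(1+R))^2/\lambda}$. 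Substituting back gives $|g_\lambda^{(j)}(x)|\leq C\sigma^j j!\,M_j^\lambda$ with $\sigma=(1+R)/R$, uniformly in $x>0$; the estimate is trivial for $x\leq 0$.

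For the non-membership, I would assume toward contradiction that $g_\lambda\in\mathcal{E}_1(M^\mu)$ for some $\mu<\lambda$. Since $g_\lambda$ is flat at $0$, applying Taylor's formula with integral remainder at every order $j$ gives $|g_\lambda(x)|\leq C(\sigma|x|)^j M_j^\mu$, hence $|g_\lambda(x)|\leq C\,h_{M^\mu}(\sigma|x|)$. A direct optimization of $j\ln t+\mu j^2/4$ in $j\geq 0$ (saddle at $j^*=-2\ln t/\mu$) yields $h_{M^\mu}(t)\leq C'\exp(-(\ln t)^2/\mu)$ for small $t>0$. Substituting the explicit form $g_\lambda(x)=e^{-(\ln x)^2/\lambda}$ and expanding $(\ln(\sigma x))^2$ reduces the inequality to
\begin{equation*}
\left(\tfrac{1}{\mu}-\tfrac{1}{\lambda}\right)(\ln x)^2 + O(|\ln x|) \leq O(1) \quad \text{as } x\to 0^+,
\end{equation*}
which is impossible since $1/\mu>1/\lambda$.

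The step I expect to be most delicate is the optimization in the upper bound: one has to take the Cauchy contour of radius proportional to $x$ (rather than a fixed radius) so that the logarithmic factor $(1+R)^{-2\ln x/\lambda}$ combines with $g_\lambda(x)=e^{-(\ln x)^2/\lambda}$ to produce, upon maximizing over $x>0$, exactly the Gaussian weight $e^{\lambda j^2/4}=M_j^\lambda$. Any more naive choice of contour would leave an extra factor in $j$ or in $L=-\ln x$ incompatible with the target sequence $M^\lambda$.
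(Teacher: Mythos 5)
Your proposal is correct and follows essentially the same route as the paper: Cauchy estimates for $G_\lambda$ on discs of radius proportional to $x$ followed by optimization over $\ln x$ for membership in $\mathcal{E}_1(M^\lambda)$, and the Taylor formula for the flat function $g_\lambda$ for non-membership in $\mathcal{E}_1(M^\mu)$. The only differences are cosmetic: you use the exact identity $\Re\bigl((\log z)^2\bigr)=(\ln|z|)^2-(\arg z)^2$ where the paper bounds $\Re(\ln z)^2$ by an inequality, and in the second half you phrase the contradiction via the asymptotics of $h_{M^\mu}$ whereas the paper plugs the specific value $x=\exp\bigl(-\tfrac{\lambda}{2}j\bigr)$ into the Taylor estimate --- the same quadratic optimization in dual form.
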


\begin{proof} Clearly, $ g_\lambda $ is $ C^\infty $ on $ \mathbb{R} $ and $ g_\lambda^{(j)}(0)=0 $ for all $ j\geq 0 $. We still denote by $ \ln $ the determination of the logarithm in $ \mathbb{C}\setminus ]-\infty,0[ $ and we set $ G_\lambda(z)=\exp\bigl(-\frac{1}{\lambda}(\ln z)^2\bigr) $. For $ 0<x<1 $, we estimate $g_\lambda^{(j)}(x)$ by applying the Cauchy formula to $ G_\lambda $ on the closed disc $ D_x =\{z\in\mathbb{C} : \vert z-x\vert \leq x/2\} $. Observe that for any $ z\in D_x $, we have 
$ \vert \ln z-\ln x \vert\leq \frac{2}{x}\vert z-x\vert\leq 1 $, hence $ \vert (\ln z)^2-(\ln x)^2\vert\leq \vert \ln z\vert +\vert\ln x\vert\leq 2\vert \ln x\vert +1 $ and $\Re (\ln z)^2\geq (\ln x)^2 +2\ln x-1 $. Thus, we get  
$ \vert G_\lambda(z)\vert \leq \exp(-\frac{1}{\lambda}\bigl((\ln x)^2+2\ln x-1\bigr)\bigr) $ and 
the Cauchy formula yields $ \vert g_\lambda^{(j)}(x)\vert\leq  2^j j!
\exp\Big(-\frac{1}{\lambda}(\ln x)^2 -(j+\frac{2}{\lambda})\ln x+\frac{1}{ \lambda}\Big) $. The maximal value of the right-hand side is obtained for $ \ln x = - \frac{\lambda}{2}(j+\frac{2}{\lambda}) $, hence $  \vert g_\lambda^{(j)}(x)\vert\leq C \sigma^j j! M_j^\lambda $ with $ C=e^{1/\lambda} $ and $ \sigma=2e^{1/4} $. The same estimate is trivial for $ x<0 $ since $ g_\lambda $ vanishes there. Thus, we have $ g_\lambda\in \mathcal{E}_1(M^\lambda) $.

We now prove the second part of the statement. Since $ g_\lambda $ is flat at $ 0 $, the Taylor formula implies 
that for any real $ \delta>0 $, any integer $ j\geq 0 $ and any $ x\in ]0,\delta[ $,  we have 
$ \frac{j!}{x^j} g_\lambda(x)\leq \sup_{0<t<\delta}\vert g_\lambda^{(j)}(t)\vert $. Being given $ \delta $, we have $ \exp\bigl(-\frac{\lambda}{2}j\bigr)\in ]0,\delta[ $  for $ j $ large enough; applying the preceding estimate with $ x=\exp\bigl(-\frac{\lambda}{2}j\bigr) $ then yields 
$  \sup_{0<t<\delta}\vert g_\lambda^{(j)}(t)\vert\geq j!\exp(\frac{\lambda}{4}j^2\bigr)
$. Hence $ g_\lambda $ does not belong to $ \mathcal{E}_1(M^\mu) $ for $ \mu<\lambda $.
\end{proof}

\section{The Ultraholomorphic Case}\label{Ult}

\subsection{Ultraholomorphic function spaces}

Let $ \gamma $ be a positive real number and let $ r $ be either a positive real number or $ +\infty $. We consider the plane sector 
\begin{equation*}
S_{\gamma,r}
=\Big\{ z\in\Sigma\ ;\ \vert \Arg z\vert<\gamma\frac{\pi}{2}\textrm{ and }\vert z\vert <r\Big\},
\end{equation*}
where $ \Sigma $ denotes the Riemann surface of the logarithm and $ \Arg $ the principal value of the argument on $ \Sigma $. We denote by $\mathcal{A}^\infty(S_{\gamma,r})$ the space of holomorphic functions which are bounded, as well as all their derivatives, in $ S_{\gamma,r} $. Any element $ f $ of $ \mathcal{A}^\infty(S_{\gamma,r}) $ has an asymptotic expansion $ \widehat{f} $ at the vertex. Indeed, for $ \gamma<2 $, the sector $ S_{\gamma,r} $ is an open subset of the complex plane and it is easy to see that $ f $ extends continuously, together with all its derivatives, to the closure of $ S_{\gamma,r} $. In particular, $ f $ has a Taylor series at $ 0 $. For $ \gamma\geq 2 $, in other words for sectors on the Riemann surface $ \Sigma $, the Taylor series still makes sense, since all the restrictions of $ f $ to subsectors of $ S_{\gamma,r} $ of aperture smaller than $ 2\pi $ necessarily have the same expansion at $ 0 $.

Being given a real sequence $ M $ satisfying \eqref{norm} and \eqref{logc}, we now denote by $ \mathcal{A}_M(S_{\gamma,r}) $ the subspace of $ \mathcal{A}^\infty(S_{\gamma,r}) $ given by those $ f $ for which there are constants $ C $ and $ \sigma $ such that 
\begin{equation*}
\vert f^{(j)}(z)\vert\leq C\sigma^j j!M_j\text{ for any }j \in\mathbb{N}\text{ and any }z\in S_{\gamma,r}.
\end{equation*}
Obviously, when $ f $ belongs to $ \mathcal{A}_M(S_{\gamma,r})$, its asymptotic expansion $ \widehat{f} $ belongs to the set $\mathcal{F}_1(M) $ of all formal power series $ F=\sum_{j\geq 0}F_jx^j $ for which one can find constants $ C>0 $ and $ \sigma >0 $ such that 
\begin{equation*}
\vert F_j\vert\leq C\sigma^j M_j \text{ for any } j\in \mathbb{N}.
\end{equation*}
For details on $ \mathcal{F}_1(M) $ and its extensions to several variables, we refer the reader to \cite{Th} and the references therein. 

An element $ f $ of $ \mathcal{A}^\infty(S_{\gamma,r}) $ is said to be \emph{flat} if it satisfies $ \widehat{f}=0 $. The ultraholomorphic class $ \mathcal{A}_M(S_{\gamma,r}) $ is said to be \emph{quasianalytic} if the only flat element that it contains is $ 0 $. By a classical result of Korenblum \cite{Kor}, this is the case if and only if 
\begin{equation}\label{kor}
\sum_{j\geq 0}\left(\frac{M_j}{(j+1)M_{j+1}}\right)^\frac{1}{1+\gamma}=\infty. 
\end{equation}

From now on, we shall say that the sequence $ M $ is \emph{strongly regular} if it  satisfies the conditions \eqref{norm}, \eqref{logc}, \eqref{modg} and \eqref{snqa} of Section \ref{spos}. In this case, it is shown in \cite{Thsect} that one can find a positive number $ \gamma(M) $ such that the following Borel-Ritt type extension property holds.

\begin{prop}[\cite{Thsect}, Theorem 3.2.1]\label{Britt}
Assume that $ M $ is strongly regular and let $ F $ be an element of $ \mathcal{F}_1(M) $. Then for any $ \gamma<\gamma(M) $, there is a function $ f $ of $ \mathcal{A}_M(S_{\gamma,\infty}) $ such that $ \widehat{f}=F $.
\end{prop}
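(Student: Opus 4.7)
The plan is to prove Proposition \ref{Britt} by a Laplace-type construction adapted to the sequence $M$, exploiting strong regularity to build a summability kernel with prescribed moments.

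First I would form a formal ``Borel transform'' of $F=\sum F_jx^j\in\mathcal{F}_1(M)$, namely
\begin{equation*}
B(t)=\sum_{j\geq 0}\frac{F_j}{j!M_j}\,t^j.
\end{equation*}
The bound $|F_j|\leq C\sigma^j M_j$ shows that $B$ is an entire function of (at worst) subexponential type, and in particular $B$ is holomorphic in a disc around the origin with well-controlled growth on all of $\mathbb{C}$.

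Next, and this is the hard analytic step, I would construct an auxiliary kernel $K$, holomorphic in a sector $S_{\gamma_0,\infty}$ of the Riemann surface of the logarithm, that plays the role of $e^{-t}$ in classical Gevrey theory. The conditions required are the moment identities $\int_0^{+\infty}t^j K(t)\,\frac{dt}{t}=j!\,M_j$ (so that it resums the asymptotic expansion correctly) together with decay estimates of the form $|K(t)|\lesssim h_M(c/|t|)$ in a sub-sector. Following the technique of \cite{Thsect}, the strong non-quasianalyticity condition \eqref{snqa} is exactly what allows such a $K$ to be built, for instance by a Mellin-type integral or an infinite product, and the moderate growth condition \eqref{modg} through its consequence \eqref{hfunct2} converts a pointwise decay like $h_M$ into one like $(h_M)^s$ with $s$ as large as needed. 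The number $\gamma(M)$ is determined as the maximal aperture of the sector in which $K$ retains the required decay.

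With the kernel in hand, I would define, for $z\in S_{\gamma,\infty}$ with $\gamma<\gamma(M)$,
\begin{equation*}
f(z)=\int_{0}^{+\infty}B(t)\,K(t/z)\,\frac{dt}{t},
\end{equation*}
the integration running along a ray whose direction may be rotated inside the larger sector $S_{\gamma_0,\infty}$ to cover $S_{\gamma,\infty}$. The moment identities, combined with Fubini and the convergence of the series for $B$, should yield $\widehat{f}=F$: expanding $B$ to order $N$, integrating term by term against $K$, and controlling the remainder using the decay of $K$ gives the expected partial sums of $F$ together with a remainder estimated by $C\sigma^N N!M_N|z|^N$. For the derivative bounds, I would differentiate under the integral sign, reduce to a similar integral involving $K^{(k)}$, and use the kernel's derivative bounds (themselves a consequence of moderate growth and Cauchy estimates) to obtain $|f^{(k)}(z)|\leq C\sigma^k k!M_k$ uniformly on $S_{\gamma,\infty}$.

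The main obstacle, as indicated, is the construction of $K$: verifying that \eqref{modg} and \eqref{snqa} suffice to produce a holomorphic function with the prescribed moments and with decay in a sector of definite opening is the technical heart of the matter, and the precise value $\gamma(M)$ emerges from this construction. Once $K$ is available, the remainder of the proof is a relatively standard manipulation of a Laplace-type integral, analogous to the Gevrey case but with $h_M$ replacing $e^{-(1/t)^{1/\alpha}}$.
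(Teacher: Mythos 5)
This proposition is not proved in the paper: it is quoted verbatim from \cite{Thsect}, Theorem 3.2.1, and the argument there is of a completely different nature from yours. Roughly, \cite{Thsect} first constructs flat functions in $S_{\gamma,\infty}$ with optimal two-sided bounds of the form $h_M(c\vert z\vert)$ (this is where \eqref{snqa} enters and where the critical aperture $\gamma(M)$ comes from), and then obtains the extension by combining an ultradifferentiable Whitney--Borel extension of $F$ on the real line with an almost-holomorphic extension and a $\bar\partial$-correction whose weights are controlled by the flat function. Your proposal is instead a moment-kernel Laplace summation in the spirit of Balser's theory (later carried out for these classes by Lastra, Malek and Sanz). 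That route is legitimate in principle, but as written it has two genuine gaps.

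First, the entire difficulty is concentrated in the object you postpone: a kernel $K$, holomorphic on a sector, with prescribed moments comparable to $j!M_j$ and with the right decay. It is not known that \eqref{modg} and \eqref{snqa} alone suffice to produce such a kernel; the existing constructions require $M$ to admit a nonzero proximate order, which is strictly stronger than strong regularity. So even if completed, your argument would prove a weaker statement than the one quoted. Second, the representation $f(z)=\int_0^{+\infty}B(t)K(t/z)\,dt/t$ diverges in exactly the cases of interest. With $B(t)=\sum_j F_j t^j/(j!M_j)$ and $\vert F_j\vert\leq C\sigma^jM_j$, the function $B$ is entire of exponential type $\sigma$ (not subexponential). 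On the other hand, a positive kernel with moments $j!M_j$ can decay at infinity at best like $\inf_{j}j!M_j\vert t\vert^{-j}$ --- in particular it cannot satisfy $\vert K(t)\vert\leq c_1h_M(c_2/\vert t\vert)$, which would force moments $O(C^jM_j)$ --- and this best possible decay is subexponential as soon as $M$ grows faster than a geometric sequence (for $M_j=j!^\alpha$ it is $\exp(-c\vert t\vert^{1/(1+\alpha)})$). Hence the integrand grows like $e^{\sigma t}$ against a subexponentially decaying kernel and the integral diverges already for Gevrey classes. The standard remedy is a \emph{truncated} Laplace transform, integrating only up to a finite, $z$-independent endpoint and checking that the truncation error is flat with $h_M$-bounds; none of this is addressed in your sketch, nor are the uniform derivative estimates that must survive the truncation.
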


The article \cite{Thsect} also provides a way to compute the optimal value of $ \gamma(M) $, but we do not need this precision in what follows. However, it should be mentioned that the condition $ \gamma<\gamma(M) $ implies that \eqref{kor} fails, hence that $ \mathcal{A}_M(S_{\gamma,r}) $ is non-quasianalytic: see \cite{Thsect} for details.

The following  proposition will be useful to relate flatness and regularity properties of elements of $ \mathcal{A}^\infty(S_{\gamma,r}) $. 

\begin{prop}\label{flatness} If $ f $ is an element of $ \mathcal{A}_M(S_{\gamma,r}) $ such that $\widehat{f}=0 $, then there are positive constants $ c_1 $ and $ c_2 $ such that 
\begin{equation}\label{flath}
\vert f(z)\vert \leq c_1 h_M(c_2\vert z\vert)\text{ for any } z\in S_{\gamma,r}.
\end{equation}
Conversely, if $ f $ is an element of $ \mathcal{A}^\infty(S_{\gamma,r}) $ such that \eqref{flath} holds for some constants $ c_1 $ and $ c_2 $, then $ f$ belongs to $ \mathcal{A}_M(S_{\gamma',r'}) $ for any $ \gamma'<\gamma $ and $ r'<r $, and we have $ \widehat{f}=0 $.
\end{prop}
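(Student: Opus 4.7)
My plan for Proposition \ref{flatness} splits into the two implications.

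For the direct implication, assume $f \in \mathcal{A}_M(S_{\gamma,r})$ with $\widehat{f} = 0$. Flatness of $f$ entails flatness of every derivative $f^{(k)}$, so $\lim_{t \to 0^+} f^{(k)}(tz) = 0$ for every $k \geq 0$ and every $z \in S_{\gamma,r}$. I would then apply the standard Taylor formula with integral remainder to the function $\phi(t) = f(tz)$ on $[0,1]$: its derivatives $\phi^{(k)}(t) = z^k f^{(k)}(tz)$ extend continuously to $0$ at $t = 0$ for $k = 0,\ldots,j-1$, so
\begin{equation*}
f(z) = \phi(1) = \frac{z^j}{(j-1)!} \int_0^1 (1-t)^{j-1} f^{(j)}(tz)\, dt.
\end{equation*}
Combined with the ultradifferentiable bound $|f^{(j)}| \leq C \sigma^j j! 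M_j$, this gives $|f(z)| \leq C(\sigma |z|)^j M_j$ for every $j \geq 0$, and taking the infimum over $j$ produces \eqref{flath} with $c_1 = C$ and $c_2 = \sigma$.

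For the converse, assume $f \in \mathcal{A}^\infty(S_{\gamma,r})$ satisfies \eqref{flath}. That $\widehat{f} = 0$ is immediate: since $h_M(c_2|z|) \leq (c_2|z|)^j M_j$ for every $j$ by the very definition of $h_M$, we get $f(z) = o(|z|^j)$ for every $j$, so all Taylor coefficients at the vertex vanish. For the ultradifferentiable bound on a strict subsector $S_{\gamma',r'}$, fix $\gamma' < \gamma$ and $r' < r$, and choose $\epsilon > 0$ small enough that both $(1+\epsilon)r' \leq r$ and $\arcsin \epsilon + \gamma'\pi/2 \leq \gamma \pi/2$ hold. For every $z \in S_{\gamma',r'}$ the closed disc $\{w : |w - z| \leq \epsilon|z|\}$ is then contained in $S_{\gamma,r}$. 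Cauchy's integral formula on this disc, combined with the monotonicity of $h_M$ and the one-term bound $h_M(t) \leq t^j M_j$, yields
\begin{equation*}
|f^{(j)}(z)| \leq \frac{j!}{(\epsilon|z|)^j}\, c_1\, h_M\bigl(c_2(1+\epsilon)|z|\bigr) \leq c_1\, j! \Bigl(\frac{c_2(1+\epsilon)}{\epsilon}\Bigr)^j M_j,
\end{equation*}
which is exactly the defining estimate of $\mathcal{A}_M(S_{\gamma',r'})$, with $\sigma = c_2(1+\epsilon)/\epsilon$.

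The main geometric point to check carefully is the placement of the Cauchy discs when $\gamma \geq 2$: since $S_{\gamma,r}$ then lives on the Riemann surface of the logarithm, the disc $\{|w-z| \leq \epsilon|z|\}$ has to be interpreted as a connected neighborhood of $z$ on $\Sigma$. Because $\epsilon < 1$, this neighborhood is single-sheeted and projects injectively onto a Euclidean disc, so Cauchy's formula applies verbatim. Notice in particular that no appeal to the moderate growth property \eqref{modg} is required for either implication.
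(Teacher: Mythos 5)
Your proof is correct and follows essentially the same route as the paper's: the Taylor formula at the vertex for the direct implication, and the Cauchy formula on discs $\{w:\vert w-z\vert\le\epsilon\vert z\vert\}$ contained in $S_{\gamma,r}$ for the converse (your remark about lifting these discs to $\Sigma$ when $\gamma\ge 2$ is a welcome precision the paper leaves implicit). The only nitpick is that the one-term bound $h_M(c_2\vert z\vert)\le(c_2\vert z\vert)^jM_j$ gives $f(z)=O(\vert z\vert^j)$ rather than $o(\vert z\vert^j)$; to conclude $\widehat{f}=0$ use instead $h_M(c_2\vert z\vert)\le(c_2\vert z\vert)^{j+1}M_{j+1}$, exactly as the paper does.
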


\begin{proof} It is more or less standard, but we briefly recall the argument for the reader's convenience. Notice that it only requires, in fact, the basic properties \eqref{norm} and \eqref{logc} of $ M $. The first part of the statement is a consequence of the Taylor formula. Indeed, the assumption $ \widehat{f}=0 $ implies that for any $ z\in S_{\gamma,r} $ and any integer $ j\geq 0 $, we have
$ \vert f(z)\vert \leq \sup_{0<t<1}\vert f^{(j)}(z)\vert \frac{\vert z\vert^j}{j!}\leq C\sigma^j M_j\vert z\vert^j $, where the constants $ C $ and $ \sigma $ are associated with $ f $ by the definition of $ \mathcal{A}_M(S_{\gamma,r}) $. Taking the infimum with respect to $ j $, we then obtain \eqref{flath} with $ c_1=C $ and $ c_2=\sigma $.
To prove the converse part, we first observe that for any $ \gamma'<\gamma $ and $ r'<r $, there is a real number $ \delta>0 $ such that for any $ z\in S_{\gamma',r'} $, the closed disc $ D_z=\{w: \vert w-z\vert\leq \delta \vert z\vert\} $ lies in $ S_{\gamma,r} $. Using the Cauchy formula on $ D_z $ and \eqref{flath}, we get
$ \vert f^{(j)}(z)\vert\leq \frac{c_1 j!}{\delta^j\vert z\vert^j}h_M(c_3\vert z\vert) $ for any $ j\geq 0 $, with $ c_3=c_2(1+\delta) $. Since the definition of $ h_M $ obviously implies $ h_M(c_3\vert z\vert)\leq (c_3\vert z\vert)^j M_j $, we derive that $ f $ belongs to $ \mathcal{A}_M(S_{\gamma',r'}) $. From the inequality $ h_M(c_3\vert z\vert)\leq (c_3\vert z\vert)^{j+1} M_{j+1} $, we also derive that $ f^{(j)}(z) $ tends to $ 0 $ at the vertex, hence $ \widehat{f}=0 $. 
\end{proof}

\subsection{Smooth roots of ultraholomorphic polynomials}

Our second main result can now be stated as follows.

\begin{thm}\label{main2} Let $ \gamma $ and $ r $ be positive real numbers. Assume that the sequence $ M $ is strongly regular, and
consider a polynomial 
\begin{equation*}
\varphi(z,w)=w^d+a_1(z)w^{d-1}+\cdots+a_d(z)
\end{equation*}
with $ a_j\in\mathcal{A}_M(S_{\gamma,r}) $ for $ j=1,\ldots,d $. If $ h $ is  an element of $ \mathcal{A}^\infty(S_{\gamma,r}) $ such that $ \varphi(z,h(z))=0 $ for $ z\in S_{\gamma,r} $, then $ h $ belongs to $ \mathcal{A}_M(S_{\gamma',r'}) $ for any $ \gamma'<\gamma $ and $ r'<r $.
\end{thm}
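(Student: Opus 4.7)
The plan is to use Proposition \ref{Britt} to construct a function $g \in \mathcal{A}_M$ whose asymptotic expansion matches that of $h$, and then to exploit the polynomial equation to bound $h - g$ by the flatness estimate \eqref{flath}; the converse part of Proposition \ref{flatness} will then yield $h \in \mathcal{A}_M(S_{\gamma',r'})$.

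First, the formal side. Since $h \in \mathcal{A}^\infty(S_{\gamma,r})$ has an asymptotic expansion $\widehat{h} \in \mathbb{C}[[x]]$, the relation $\varphi(z,h(z))=0$ forces $\widehat{\varphi}(x,\widehat{h}(x))=0$ in $\mathbb{C}[[x]]$. To be able to apply Borel-Ritt, one needs $\widehat{h} \in \mathcal{F}_1(M)$. I would establish this by a formal analogue of Proposition \ref{Puiseux} adapted to $\mathcal{F}_1(M)$: the method of \cite{BM} requires only stability under ring operations, substitution, and monomial division, all of which hold for $\mathcal{F}_1(M)$ under \eqref{norm}, \eqref{logc}, and \eqref{modg}. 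This produces a Puiseux-type factorization whose roots lie in $\mathcal{F}_1(M)$, and identifies $\widehat{h}$ with a suitable one among them.

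Next, cover a fixed strict subsector $S_{\gamma',r'}$ by finitely many subsectors $T_i \subset S_{\gamma,r}$ of aperture strictly less than $\gamma(M)$, and apply Proposition \ref{Britt} on each to obtain $g_i \in \mathcal{A}_M(T_i)$ with $\widehat{g_i} = \widehat{h}$. Let $k \geq 1$ denote the multiplicity of $\widehat{h}$ as a formal root of $\widehat{\varphi}$, so that in $\mathcal{F}_1(M)[y]$ one has $\widehat{\varphi}(x,y) = (y-\widehat{h}(x))^k \widehat{Q}(x,y)$ with $\widehat{Q}(x,\widehat{h}(x))$ non-zero of some order $N$. Performing the analogous Euclidean division in $w$ over $\mathcal{A}_M(T_i)$, with $g_i$ in place of $\widehat{h}$, yields
\begin{equation*}
\varphi(z,w) = (w - g_i(z))^k Q_i(z,w) + P_i(z,w),
\end{equation*}
with $Q_i, P_i \in \mathcal{A}_M(T_i)[w]$ and $\deg_w P_i < k$; the coefficients of $P_i$ in $w$ are flat elements of $\mathcal{A}_M(T_i)$, since the corresponding formal remainder vanishes. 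Substituting $w = h(z)$ and setting $f_i := h - g_i$ gives
\begin{equation*}
f_i(z)^k Q_i(z, h(z)) = - P_i(z, h(z)).
\end{equation*}

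The first part of Proposition \ref{flatness} bounds $|P_i(z,h(z))|$ by $C h_M(c|z|)$ on $T_i$, while the uniform Taylor expansion of the $\mathcal{A}^\infty$ function $Q_i(z,h(z))$ on a slightly smaller subsector $T_i' \subset T_i$ gives $|Q_i(z,h(z))| \geq c'|z|^N$ there. Combining these and invoking \eqref{hfunct1} to absorb the factor $|z|^{-N}$ yields $|f_i(z)|^k \leq C_1 h_M(c_2|z|)$; taking $k$-th roots and applying \eqref{hfunct2} (where \eqref{modg} is essential) produces $|f_i(z)| \leq c_1 h_M(c_2'|z|)$. The converse part of Proposition \ref{flatness} then gives $f_i \in \mathcal{A}_M$ on a further subsector, hence $h = g_i + f_i \in \mathcal{A}_M$ there, and patching over the finite cover of $S_{\gamma',r'}$ yields the conclusion. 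The main obstacle is the initial formal step of showing $\widehat{h} \in \mathcal{F}_1(M)$, which demands a Puiseux-type analysis of the class $\mathcal{F}_1(M)$; the rest is a careful accounting of flatness estimates via the polynomial equation and the auxiliary properties \eqref{hfunct1}--\eqref{hfunct2} of $h_M$.
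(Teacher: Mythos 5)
Your overall architecture is, apart from the first step, the same as the paper's: reduce to apertures smaller than $\gamma(M)$ by subdividing the sector, use Proposition \ref{Britt} to realize $\widehat h$ by some $g\in\mathcal{A}_M$, and then extract the flatness estimate \eqref{flath} for $h-g$ from the polynomial equation via \eqref{hfunct1}, \eqref{hfunct2} and the converse half of Proposition \ref{flatness}. Your division of $\varphi$ by $(w-g_i)^k$ is the paper's reduction in different clothing: the paper first replaces $\varphi(z,w)$ by $\varphi(z,h_0(z)+w)$ and $h$ by $h-h_0$, after which the division is simply by $w^m$ and the remainder $\Phi$ visibly has flat coefficients; your upper and lower bounds and the use of \eqref{hfunct1}--\eqref{hfunct2} then match the paper's estimates \eqref{below}--\eqref{fin2}.

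The gap is in the formal step, which you correctly identify as the crux but propose to resolve with a tool that does not deliver the conclusion. A Puiseux factorization in $\mathcal{F}_1(M)$ in the style of Proposition \ref{Puiseux} only factors the \emph{ramified} polynomial: it gives $\widehat\varphi(x^m,y)=\prod_j(y-Y_j(x))^{n_j}$ with $Y_j\in\mathcal{F}_1(M)$, hence $\widehat h(x^m)=Y_{j_0}(x)\in\mathcal{F}_1(M)$. Writing $\widehat h=\sum_i h_ix^i$, this yields only $\vert h_i\vert\leq C\sigma^{mi}M_{mi}$, i.e. $\widehat h$ lies in $\mathcal{F}_1(M^{(m)})$ with $M^{(m)}_i=M_{mi}$, a strictly larger class than $\mathcal{F}_1(M)$ for any strongly regular $M$ (for Gevrey sequences $M_j=j!^\alpha$ one has $M_{mi}=(mi)!^\alpha$, which is not $O(\tau^iM_i)$). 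The descent from the ramified variable back to the original one therefore loses exactly the quantitative information you need in order to apply Proposition \ref{Britt}. The paper circumvents this: formal Puiseux over $\mathbb{C}[[x]]$ is used only to show that the unramified equation $\widehat\varphi(x,H(x))=0$ has finitely many solutions $H_1,\dots,H_K$ in $\mathbb{C}[[x]]$, and then Mouze's Artin-type approximation theorem for $\mathcal{F}_1(M)[w]$, applied exactly as Proposition \ref{Artin} is in the proof of Theorem \ref{main}, produces solutions in $\mathcal{F}_1(M)$ agreeing with each $H_k$ to arbitrarily high order; by finiteness these must coincide with the $H_k$, whence $\widehat h=H_{k_0}\in\mathcal{F}_1(M)$. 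You need this approximation step (or an equivalent quantitative Newton--Puiseux argument for the unramified equation) to make your first step correct; the rest of your argument then goes through.
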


\begin{proof}
We remark that the asymptotic expansion $ \widehat{h} $ obviously satisfies $ \widehat{\varphi}(z,\widehat{h}(z))=0 $, where $ \widehat{\varphi}(z,w)=w^d+\widehat{a_1}(z)w^{d-1}+\cdots+\widehat{a_d}(z) $.
Puiseux's theorem in the formal case implies that the equation 
$ \widehat{\varphi}(z,H(z))=0 $, where $ H $ is a formal power series, has a finite number of solutions $ H_1,\ldots,H_K $. Using a version of Artin's theorem for equations in $ \mathcal{F}_1(M)[w] $ (see e.g. \cite{Mouze}) instead of Proposition \ref{Artin}, we can proceed as in the proof of Theorem \ref{main} to derive that each $ H_k $ belongs to $\mathcal{F}_1(M) $. Since $ \widehat{h}=H_k $ for some $ k $, we have in particular
\begin{equation}\label{hinF1M}
\widehat{h}\in \mathcal{F}_1(M).
\end{equation}

Using, if necessary, a finite subdivision of $ S_{\gamma,r} $ into smaller subsectors, and rotating the subsectors, the problem is reduced to the case $\gamma<\gamma(M) $. Thus, by \eqref{hinF1M} and Proposition \ref{Britt}, there is an element $ h_0 $ of $ \mathcal{A}_M(S_{\gamma,r}) $ such that $ \widehat{h_0}=\widehat{h} $. Replacing $ \varphi(z,w) $ by $\varphi(z,h_0(z)+w) $ and $ h $ by $ h-h_0 $, we reduce the problem to the special situation 
\begin{equation}\label{flat} 
\widehat{h}=0.
\end{equation}
In particular, we then have $ \widehat{\varphi}(z,0)=0 $. Thus, there is an integer $ m $, with $ 1\leq m\leq d $, such that
\begin{equation}\label{nulcoef}
\widehat{a_j}=0 \quad\text{for}\quad d-m<j\leq d
\end{equation}
and
\begin{equation}\label{nonul}
\widehat{a_{d-m}}\neq 0, 
\end{equation}
using the convention $ a_0=1 $ in the case $ m=d $. Now, put $ p(z,w)=w^{d-m}+{a_1}(z)w^{d-1-m}+\cdots+{a_{d-m}}(z) $ and $ \Phi(z,w)=\varphi(z,w)-w^mp(z,w) $. 
We then have
\begin{equation}\label{sol}
\Phi(z,h(z))=\varphi(z,h(z))-(h(z))^mp(z,h(z))=(h(z))^m p(z,h(z)). 
\end{equation}
Moreover, by \eqref{flat} and \eqref{nonul}, there are constants $ c_0>0 $ and $ \nu\geq 1 $ such that
\begin{equation}\label{below}
\vert p(z,h(z))\vert\geq c_0\vert z\vert^\nu \text{ for any } z\in S_{\gamma,r}.
\end{equation}
Notice that we also have
$ \Phi(z,w)=\sum_{j=d-m+1}^da_j(z)w^{d-j}. $ Thus, \eqref{nulcoef} and Proposition \ref{flatness} imply that, given a positive real number $ C $, there are constants $ c_1 $ and $ c_2 $, depending only on $C $, $ M $ and $ d $, such that  
\begin{equation}\label{above}
\vert \Phi(z,w)\vert\leq c_1 h_M(c_2\vert z\vert)\text{ for any }z\in S_{\gamma,r}\text{ and any } w \text{ with } \vert w\vert\leq C.
\end{equation}
Gathering \eqref{sol}, \eqref{below} and \eqref{above}, we get
\begin{equation}\label{fin1}
\vert h(z)\vert^m\leq c_0^{-1}c_1\vert z\vert^{-\nu}h_M(c_2\vert z\vert)\text{ for any } z\in S_{\gamma,r}.
\end{equation}
Using properties \eqref{hfunct1} and \eqref{hfunct2}, and up to changing the values of $ c_1 $ and $ c_2 $, we derive
\begin{equation}\label{fin2}
\vert h(z)\vert\leq c_1 h_M(c_2\vert z\vert)\text{ for any } z\in S_{\gamma,r}.
\end{equation}
Proposition \ref{flatness} then yields the desired conclusion.
\end{proof}

\subsection{A question} 
% Contrarily to Theorem \ref{main}, the preceding result deals with certain non-quasianalytic situations: indeed, for $ \gamma $ small enough, 
%$ \mathcal{A}_M(S_{\gamma,r}) $ contains flat functions. 
In the preceding proof, property \eqref{hfunct2} allows us to derive the key estimate \eqref{fin2} from \eqref{fin1}. Since \eqref{hfunct2} is itself a consequence of the moderate growth assumption \eqref{modg}, this suggests that \eqref{modg} plays a key role. It actually turns out that it cannot be omitted, as we shall now explain. For any given real number $\lambda>0 $, notice first that the sequence $ M^\lambda $ defined in Section \ref{contrex} satisfies all the requirements of strongly regular sequences, except \eqref{modg}. Beside this, a slight modification of the proof of Lemma \ref{glam} shows that the function $ G_\lambda $ defined there belongs to $ \mathcal{A}_{M^\lambda}(S_{\gamma,r}) $ for $ \gamma $ small enough, but not to $ \mathcal{A}_{M^\mu}(S_{\gamma,r}) $ for $ \mu<\lambda $. Thus, if we put $ \varphi(z,w)=w^2-G_\lambda(z) $ and $ h(z)= G_{2\lambda}(z) $, we obtain a counter-example to the conclusion of Theorem \ref{main2} when the moderate growth assumption is omitted. A natural question now arises. 

\begin{prob}
Does Theorem \ref{main} hold for strongly regular sequences, just as Theorem \ref{main2}? In particular, is the conclusion of Theorem \ref{main} true for Gevrey classes?
\end{prob} 

It should be remarked that the higher-dimensional analogue of this question (that is, when $ h $ and the coefficients $ a_j $ are defined in $ \mathbb{R}^n $ with $ n\geq 2 $) has a negative answer: indeed, Section 4.5 of \cite{Thi0} provides the example of a function $ g $ belonging to a given Gevrey class in $ {\bf R}^2 $ and whose square root is smooth but does not belong to the same class.

\end{document}